\newtheorem{thm}{Theorem}
\newtheorem{lem}[thm]{Lemma}
\newtheorem{exmp}[thm]{Example}
\definecolor{cadmiumred}{rgb}{0.89, 0.0, 0.13}
\newcommand{\Real}{\mathbb{R}}
\newcommand{\norm}[1]{\left\lVert#1\right\rVert}
\title{\LARGE\textbf{A Modelling Framework for Energy-Management \\ and Eco-Driving Problems using Convex Relaxations}}
\author{Y.J.J. Heuts and M.C.F. Donkers
\thanks{
The authors are with the Department of Electrical Engineering, Eindhoven University of Technology, Netherlands
	{\tt\small \{y.j.j.heuts, m.c.f.donkers\}@tue.nl}}%
}
\begin{document}

\maketitle
\begin{abstract}
This paper presents a convex optimization framework for eco-driving and vehicle energy management problems. We will first show that several types of eco-driving and vehicle energy management problems can be modelled using the same notions of energy storage buffers and energy storage converters that are connected to a power network. It will be shown that these problems can be formulated as optimization problems with linear cost functions and linear dynamics, and nonlinear constraints representing the power converters. We will show that under some mild conditions, the (non-convex) optimization problem has the same (globally) optimal solution as a convex relaxation. This means that the problems can be solved efficiently and that the solution is guaranteed to be globally optimal. Finally, a numerical example of the eco-driving problem is used to illustrate this claim.
\end{abstract}

\section{Introduction}
Improving vehicle energy efficiency is an important topic of research for the automotive industry in order to reduce the carbon footprint of vehicles as well as increase the vehicle's range per charge. These goals can be obtained by applying optimization methods, such as eco-driving and energy management. The eco-driving problem, see, e.g., \cite{padilla2018global,borsboom2021convex, ebbesen2017time,johannesson2015look}, uses a longitudinal vehicle model and information on the route to predict the most energy or time optimal velocity profile in order to reach the destination and can be implemented to actively coach the driver to drive in a more energy efficient way. Vehicle energy management, see, e.g., \cite{scordia2005global,schmid2021energy,jager2013HEV,romijn2018distributed,padilla2022complete,egardt2014electromobility,padilla2020port} attempts to optimize the energy flow in the vehicle to use the least amount of energy as possible. This leads to a decrease in operational cost, or an increase in driving range. Both methods are often at the basis of many current developments, such as platooning \cite{MA2021102746}, time-optimal control in traffic scenarios~\cite{hamednia2020time}, optimal component sizing \cite{HUANG2018132} and emission management for heavy-duty vehicles \cite{mennen2022sequential}. 

The optimization problems that arise in eco-driving and energy management may be solved in different ways, such as Dynamic Programming (DP), see, e.g., \cite{scordia2005global,johannesson2015look}, or Pontryagin's maximum principle (PMP), see, e.g., \cite{schmid2021energy,jager2013HEV}, but these methods can typically not appropriate when the complexity of the problem increases. A recent trend is therefore to consider static optimization to handle the complexity of the problem, which can then be solved by off-the-shelve solvers. High-fidelity powertrain models lead to nonlinear optimization problems that can be solved using derivative-free~methods, see, e.g., \cite{gao2007hybrid}, or by sequentially linearizing the problem, as is done in SQP, see, e.g., \cite{padilla2018global,mennen2022sequential}. These methods do not scale well in terms of number of components and do not always warrant optimality, as it is possible to end up in a local minimum. Therefore, problem formulations that lead to convex optimization problems are very desirable, as they scale well and lead to globally optimal solutions. 

Formulations of eco-driving and energy management that are either convex or use convex relaxations have been considered in the literature, e.g., in
\cite{romijn2018distributed,egardt2014electromobility,padilla2022complete,borsboom2021convex, ebbesen2017time}. The main difference between the approaches in these papers are the properties that are required for the component models. In \cite{egardt2014electromobility}, it is shown that models needs to be monotonous in one argument. On the other hand, it is argued in \cite{romijn2018distributed,padilla2022complete} that energy management problems can be modelled as power networks, where converters, such as the electric machine or internal combustion engine, are limited to quadratic functions. Finally, \cite{borsboom2021convex,ebbesen2017time} utilizes hyperbolic functions, such that they become second-order constraints when relaxed, though a formal proof on exactness of this relaxation is not given. Hence, convex modelling techniques are restrictive in the allowed component models and/or problem structure and  statements on exactness of the relaxation are not sufficiently general.

In this paper, we generalize the framework for complete vehicle energy management problems of \cite{romijn2018distributed,padilla2022complete}, and we will show that examples from \cite{padilla2018global,egardt2014electromobility,borsboom2021convex,ebbesen2017time,padilla2020port} also fit this framework. If the problem has been set up to fit the framework, it can be solved as a convex optimization problem by relaxing the (nonlinear) equalities representing the energy converter models into inequalities. We will prove that under some mild conditions on the network and the converter models, the solution of the relaxed problem will also be the global solution of the original non-convex problem. These conditions generalize the conditions of \cite{padilla2022complete} and \cite{egardt2014electromobility} warrant that the constraints are linearly independent, meaning that strong duality between the primal and dual problem holds. Then, using the dual optimization problem, it will be shown that the convex relaxation solution is equal to the non-convex solution and can therefore be used to solve the problem using more efficient tools. Moreover, we show that for several converter models, it is possible to formulate second-order conic constraints, for which a wide variety of solvers exist.

The remainder of this paper is organized as follows, Section~\ref{sec:modelling-framework} introduces the modelling framework, and gives examples of problems that fit within the framework. In Section~\ref{sec:solution-framework}, we will present the convex relaxation and give conditions under which the relaxation is exact. A numerical example is provided in Section~\ref{sec:numex} and the conclusions are drawn in Section~\ref{sec:conclusions}.

\section{Modelling framework}\label{sec:modelling-framework}
In this section, we will first discuss a description of a power network on which our framework is based. Subsequently, the optimal control problem is presented underlying the energy management problems. Finally, by introducing three examples, we show that the framework is general enough to capture all these problems.

\begin{figure}[t]
    \centering
    \vspace{2mm}
    \includegraphics[width=\linewidth]{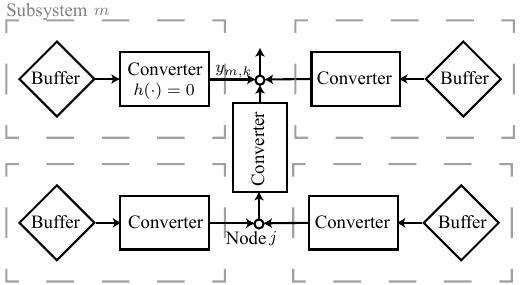}
    \caption{Schematic of a vehicle power network.}
    \label{fig:powernetwork}
\end{figure}

\subsection{General description}
Energy management problems aim to minimize the energy consumption for a network of subsystems $m\in \mathcal{M} := \{1 \dots M\}$, where $M$ is the total number of subsystems. These subsystems can be composed of buffers and converters, see Fig.~\ref{fig:powernetwork}. Buffers model the available stored energy within the network, e.g., the amount of charge in a battery. Converters, on the other hand, model the energy conversion from one domain to another, e.g., from chemical to mechanical energy. Furthermore, converters are connected to each other in a specific topology via nodes $j \in \mathcal{J} := \{1\dots J\}$, where $J$ indicates the number of nodes in the network. 

\subsection{Optimal Control Problem} \label{sec:canon}
The optimal control problem in the above power network aims to minimize the total energy usage of the components over a time horizon $k \in \{0\dots K-1\}$. A cost function adhering to this goal can be expressed as
\begin{subequations}\label{eq:nonconvex-opt}
    \begin{equation}
        \min_{y_{m,k},u_{m,k},x_{m,k}}  \sum_{k\in\mathcal{K}} \sum_{m\in \mathcal{K}} \!  a_m^\top u_{m,k} + b_m y_{m,k},
    \end{equation}
    where $a_m$ and $b_m$ are coefficients to define the cost function, and $u_{m,k}$ are the inputs and $y_{m,k}$ are scalar outputs. The minimization is subject to the input-output behaviour of the converters that are given by, 
    \begin{equation}
        h_m(x_{m,k},u_{m,k},y_{m,k}) = 0, \label{eq:nonconvex-opt-h}
    \end{equation}
    for all $m\in\mathcal{M}$ and all $k\in\mathcal{K}$, for which additional conditions will be introduced in the next section. When compared to \cite{romijn2018distributed,padilla2022complete}, the concept of energy conversion is broadened in this paper to include also the state $x_{m,k}$ and more general classes of functions. The converters are subject to the linear system dynamics of the energy buffer, given by
    \begin{equation}
        x_{k+1}=A x_{k} + B u_{k} + f_{k},
    \end{equation}
    where the $A$ and $B$ matrices are typically block diagonal (as the buffers are only connected though the converters), $x_k = [x_{1,k}^\top\dots x_{M,k}^\top]^\top$ is the state of all buffers, with $x_{0} = x_{\text{init}}$ are the initial states of the buffers, $u_k = [u_{1,k}^\top\dots u_{M,k}^\top]^\top$ is the inputs to all the buffers, and $f_{k}$ is a known disturbance to the system. In case the buffer is nonlinear, it can often be decomposed in a linear dynamic part (1c) and a static nonlinearity (1b), see the example in \cite{padilla2022complete}. In order to connect the buffers and the converters, we require power network nodes, which are given by
    \begin{equation}
        E x_{k} +  F u_{k} + G y_{k} + s_{k}= v_{k},        
    \end{equation}
    where $E$, $F$ and $G$ describe the network connections, $y_k = [y_{1,k}\dots y_{M,k}]^\top$, and $s_{k}\geqslant0$ is a slack variable which is used to differentiate between a conservative node in which all energy that passes through the node is conserved, leading to that element of $s_k$ being zero and a dissipative node, where part of the energy may be dissipated, i.e., through the brakes, leading to that element of $s_{k}$ to be non-negative. Furthermore, the load signal, $v_{k}$ is assumed to be known at each time instant $k\in\mathcal{K}$. Lastly, the states and inputs have to adhere to physical upper and lower bounds, i.e.,
    \begin{equation}
        \underline{x} \leqslant x_{k} \leqslant \overline{x}\quad \text{and} \quad \underline{u} \leqslant u_{k} \leqslant \overline{u}.
    \end{equation}
\end{subequations}

\subsection{Examples}
In order to show that a wide variety of energy management problems fit in this framework, examples are given of an eco-driving problem, and an energy management problem either defined using powers and energies or using a port-Hamiltonian representation.

\begin{exmp}\label{exmp:ED}
Eco-driving aims to find an optimal velocity profile and corresponding input power. Examples from literature are given in, e.g., ~\cite{padilla2018global, johannesson2015look}. A simplified longitudinal vehicle model is considered, and this model can be given as function of time or travelled distance. The longitudinal vehicle model is given by Newton's second law
\begin{equation}
m_e \tfrac{\mathrm{d}v}{\mathrm{d}t} = F_p - c_gv^2 - mg c_r\cos \alpha, - mg\sin \alpha, \label{eq:Newton_s_time}
\end{equation}
where $v$ is the longitudinal velocity, $c_g$ is the lumped aerodynamic drag constant and $c_r$ is the rolling resistance constant, $F_p$ is the propulsion force defined by the brake (br) force and electric machine (em) force $F_p = F_{br} + F_{em}$, $g$ is the gravitational acceleration, $m_e$ is the equivalent vehicle mass including the actual mass $m$, and estimated induced inertia by rotational parts to make the model linear. The road gradient is indicated by $\alpha$. The vehicle's kinetic energy is given by
\begin{equation}
E_k = \tfrac{1}{2}m_ev^2. \label{eq:Ekin}
\end{equation}
We can introduce the coordinate transformation from the time to distance domain
\begin{equation}
\tfrac{\mathrm{d}v}{\mathrm{d}t} = v\tfrac{\mathrm{d}v}{\mathrm{d}s} = \tfrac{1}{2}\tfrac{\mathrm{d}}{\mathrm{d}s}v^2, \label{eq:vdvds}
\end{equation}
which leads to
\begin{equation}
\tfrac{\mathrm{d}}{\mathrm{d}s}E_k = F_p - mg(\sin \alpha(s)\! +\! c_r\cos \alpha(s)) - \tfrac{c_g}{m_e}E_k. \label{eq:Newton_s_distance}
\end{equation}
In addition, we need the concept of lethargy, $\ell=\frac{dt}{ds}$, which satisfies $v\frac{dt}{ds} = 1$ and is used to keep time in our problem formulation. The complete block diagram of the eco-driving problem can be seen in Fig.~\ref{fig:blockdiagramecodriving}.

	\begin{figure}[t]
		\centering
		\includegraphics[width=1\linewidth]{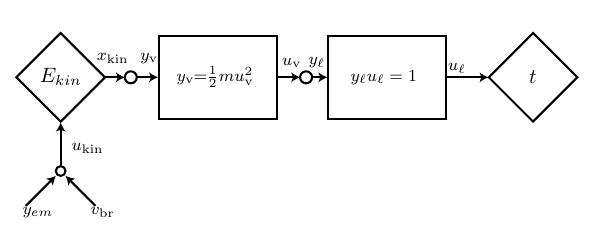}
		\caption[Block diagram of eco-driving problem]{Block diagram of eco-driving problem}
		\label{fig:blockdiagramecodriving}
	\end{figure}
 
	The discretized problem with time constraint can then be formulated as
	\begin{subequations}
		\begin{align}
			&\min \sum_{k\in\mathcal{K}}  y_{em,k}\\
			\text{s.t.} &\quad y_{\textrm{v},k} = 0.5mu_{\textrm{v},k}^2 \label{eq:mv2} \\
			&\quad u_{\ell,k}y_{\ell,k} = 1\label{eq:lethv}\\
            &\quad x_{\textrm{kin},k+1} = A_d x_{\textrm{kin},k} + B_du_{\textrm{kin},k} + u_{\textrm{kin},k} \label{eq:eco-dyn1}\\
			&\quad x_{\ell,k+1} = x_{\ell,k} + \delta_s\cdot u_{\ell,k} \label{eq:eco-dyn2} \\ 
			&\quad u_{kin,k} + y_{\textrm{em},k} + s_{\textrm{br},k} = 0 \label{eq:eco-netw1}\\
            &\quad y_{\ell,k} + u_{\textrm{v},k} = 0 \label{eq:eco-netw2} \\ 
            &\quad y_{\textrm{v},k} - x_{\textrm{kin},k} = 0 \label{eq:eco-netw3} \\
			&\quad x_{t,K} \leqslant T_{\max},
		\end{align}
	\end{subequations}
	with $A_d = e^{-c_g\delta_s/m_e}$, $B_d = \frac{m_e}{c_g}(1 - A_d)$, and $K$ being the horizon length, $\delta_s$ being the sample distance and the lethargy $u_{\ell,k} = \frac{ds}{dt}$. Eqns. \eqref{eq:mv2} and \eqref{eq:lethv} describe the converters, \eqref{eq:eco-dyn1} and \eqref{eq:eco-dyn2} cover the dynamics of the problem, and \eqref{eq:eco-netw1}, \eqref{eq:eco-netw2} and \eqref{eq:eco-netw3} describe the power network.
	
\end{exmp}
\begin{exmp}\label{exmp:EM}
	Vehicle energy management has the goal to distribute the vehicle's energy requirements, resulting from propelling the vehicle, as well as the vehicle's auxiliary systems, optimally over one or multiple energy sources (i.e., internal combustion engine, battery, or fuel cell) \cite{jager2013HEV}. This problem can be extended to a `complete vehicle energy management' problem by adding auxiliary vehicle subsystems to the topology in order to optimize the energy flow for all consumers and generators inside the vehicle \cite{romijn2018distributed}. 
 
    One possible power network is shown in Fig.~\ref{fig:blockdiagramseries-hybrid-powernetwork}. In this example, we consider two energy buffers, namely a fuel-based converter, such as an internal combustion engine, and a battery, one node and three converters. The buffers can be modelled as
	\begin{equation}
		\dot E_j = -P_j, \quad m \in \{f, s\},
	\end{equation}
	where $f$ denotes fuel and $s$ the stored battery energy. Both buffers are upper and lower bounded, and the flow of fossil fuel can only be positive, i.e., $P_f > 0$. The converters are given by quadratic relations in \cite{padilla2022complete}, such that
	\begin{equation}
		y_{m,k} =  \alpha_{2,m} u_{m,k}^2 + \alpha_{1,m} u_{m,k} + \alpha_{0,m} 
	\end{equation}
    for $m \in \{\text{f}, \text{bat}, \text{em}\}$, where $u_{m,k}$ and $y_{m,k}$ denote the input and output of the converter. The energy balance within a node is given by
	\begin{equation}
		y_{\textrm{f}} + y_{\textrm{em}} = v_p + u_{\textrm{br}} \quad \text{with} \quad u_{\textrm{br}}\geqslant0
	\end{equation}
    where $v_p$ is the required power from the drive train, which is given in this case. The CVEM problem of Fig.~\ref{fig:blockdiagramseries-hybrid-powernetwork} can then be given by
	\begin{subequations}
		\begin{align}
			\min &\sum_{k\in\mathcal{K}}  y_{\text{f}} \label{eq:em-cost}\\
			\text{s.t.} & \quad \alpha_{2,m} u_{m,k}^2 + \alpha_{1,m} u_{m,k} + \alpha_{0,m} - y_{m,k} = 0 , \label{eq:em-h}\\
            & \quad x_{k+1} = x_k + \delta_Tu_{s,k}, \label{eq:em-dyn}\\
			& \quad y_{\text{f},k} + y_{\text{em},k} + s_{\text{br},k} = v_{p,k}, \label{eq:em-netw1}\\
			& \quad y_{\text{s},k} + u_{\text{em},k} = 0, \label{eq:em-netw2}
		\end{align}
	\end{subequations}
	with $\delta_{T}$ is the sampling time, and $m \in \{\text{f}, \text{s}, \text{em}\}$. Moreover, to fit the problem in the framework, the brake power becomes the slack variable for dissipating energy from the network, such that $u_{\textrm{br},k} = s_k \geqslant0$. Here, \eqref{eq:em-h} is the converter model, \eqref{eq:em-dyn} are the dynamics, and \eqref{eq:em-netw1} and \eqref{eq:em-netw2} are the network constraints.

	\begin{figure}[t]
		\centering
		\includegraphics[width=1\linewidth]{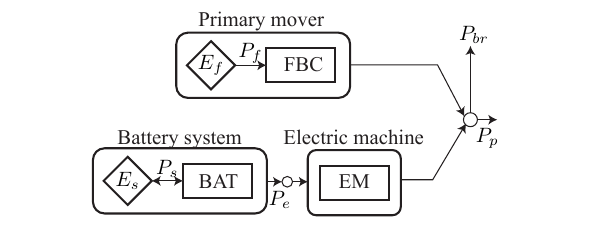}
		\caption[Block Diagram Power Network Series Hybrid Vehicle]{Block diagram power network series hybrid vehicle}
		\label{fig:blockdiagramseries-hybrid-powernetwork}
	\end{figure}
 
\end{exmp}

\begin{exmp}\label{exmp:pH}
While the previous example covered a power/energy-based problem formulation, it is also possible to model energy management problems in a port-Hamiltonian manner, where each converter has inputs and outputs which often represent voltages and currents, see \cite{padilla2020port}. In this framework, each subsystem has its internal energy expressed as,
	\begin{equation}
		\mathcal{H}_m(x_m) = \tfrac{1}{2}x^\top_{m}Q_mx_m,
	\end{equation}
	where the change in energy
	\begin{equation}
		\Delta\mathcal{H} = \mathcal{H}_m(x_{m,K}) - \mathcal{H}_m(x_{m,0})
	\end{equation} can be used in the cost function in order to penalize dissipating systems. The remainder of the formulation is represented as an input-state-output port-Hamiltonian system with continuous time dynamics,
    \begin{subequations}\label{eq:pH_dyn}
	\begin{align}
		\dot x_m &\!= \!J(x_m)\tfrac{\partial H_m}{\partial x_m} + b(x_m)u_m \label{eq:pH1}\\
		y_{m} &\!= \!b^\top(x_m)\tfrac{\partial H_m}{\partial x_m}(x_m),\label{eq:pH2}
	\end{align}
	\end{subequations}
    where $J(x_m)$ is a skew symmetric matrix called the interconnection matrix and $b(x_m)$ is the input matrix. Moreover, the network is (fully) connected through the network nodes 
	\begin{subequations}
    \begin{align}
		\sum_{m\in \mathcal{M}} f_{j,m}u_m + g_{j,m}y_m &= 0, \label{eq:pH3a} \\
        f_{j,m}y_m + g_{j,m}u_m &= 0, \quad \text{for all} \  m\in\mathcal{M} \label{eq:pH3b}
	\end{align}
    \end{subequations}
	where depending on if the node is an additive, in case of \eqref{eq:pH3a}, or equality node, i.e., \eqref{eq:pH3b}, $f_{j,m}, g_{j,m} \in \{0,1\}$ depending on if the system $m$ is connected to the node $j$. Moreover, energy conserving converters, such as dc-dc converters, are modelled as
	\begin{equation}
		y_{j,k}^\top u_{j,k} + y_{j+1,k}^\top u_{j+1,k} = 0,\label{eq:pH4}
	\end{equation}
    which can be decomposed as two converter models and one additional power node.
    
    In order to model this example into the canonical form, \eqref{eq:nonconvex-opt}, the cost function needs to be converted to a linear function, which is possible by adding another decision variable and constraint. Moreover, \eqref{eq:pH_dyn} is linearized and discretized, such that the optimal control problem becomes
    \begin{subequations}
        \begin{align}
            \min  &\sum_{m\in\mathcal{M}} \sum_{k\in\mathcal{K}} y_{m,k}  \\
            \mathrm{s.t.} \ & \tfrac{1}{2} x_{m,k+1}^\top Q_m x_{m,k+1} - \tfrac{1}{2} x_{m,k}^\top Q_m x_{m,k} - y_{m,k} = 0 \label{eq:ph-h1}\\
            & y_{m,k} u_{m,k} = u_{m+j,k} \label{eq:ph-h2}\\
            & x_{m,k+1} = A x_{m,k} + Bu_k \label{eq:ph-dyn}\\
            & y_{m,k} = C x_{m,k} + D u_{m,k} \label{eq:ph-netw1}\\
            & Fu + Gy = 0 \label{eq:ph-netw2}\\            
            & \underline{x} \leqslant x_k \leqslant \overline{x}, \quad \underline{u} \leqslant u_k \leqslant \overline{u},
        \end{align}
    \end{subequations}
    with $m \in \mathcal{M}$ are the converters in the network. Finally, \eqref{eq:ph-h1} and \eqref{eq:ph-h2} are the converters, \eqref{eq:ph-dyn} are the dynamics and \eqref{eq:ph-netw1} and \eqref{eq:ph-netw2} are the network constraints.
\end{exmp}

It can be seen that the modelling framework, consisting of buffers, converters and network nodes, can be used to formulate all these examples, thus allowing the examples to be formulated in the form of Problem \eqref{eq:nonconvex-opt}. An example of a more complex problem with more converters has been studied in \cite{romijn2018distributed} and a problem with more complex buffer models (in Wiener-Hammerstein form) have been studied in \cite{padilla2022complete} and also fit the form of Problem \eqref{eq:nonconvex-opt}. The next section will focus on finding a solution to the posed problems.

\section{Solution Framework}\label{sec:solution-framework}

As motivated in the previous section, the modelling framework is sufficiently general. Still, the optimization problem \eqref{eq:nonconvex-opt} is non-convex because of the nonlinear energy converter \eqref{eq:nonconvex-opt-h}. In this section, we will present a solution strategy based on a convex relaxation of \eqref{eq:nonconvex-opt} and show that this convex problem has the same (global) solution as \eqref{eq:nonconvex-opt}, thereby generalizing the results of \cite{romijn2018distributed,padilla2022complete,egardt2014electromobility}. 

\begin{figure}[t]
\centering
\vspace{2mm}
\includegraphics[width=\linewidth]{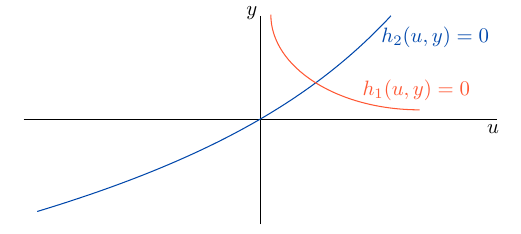}
\caption{Two common forms of convertor models $h_m = 0$. It can be seen that for $h_1$, we have $\frac{\partial y}{\partial u}\geqslant 0$ and $\frac{\partial y}{\partial u}\geqslant 0$ for $h_2$.}
\label{fig:functions}
\end{figure}

For the convex relaxation to have the same solution, we require that the converters models
\begin{itemize}
\item[\emph{i)}] can be relaxed to a convex function, i.e., $\nabla^2 h_m(x,u,y) \succeq 0$
\item[\emph{ii)}] are strictly decreasing in its output, i.e., $\frac{\partial h_m(x,u,y)}{\partial y} < 0$
\end{itemize}
for all feasible $x$, $u$ and $y$. Furthermore, we require the power network to be such that 
\begin{itemize}
\item[\emph{iii)}] the entries of $b_m$ and the matrix $G$ are positive
\item[\emph{iv)}] if an output of a converter $y_{m,k}$ is connected to the node, the states $x_{m,k}$ and inputs $u_{m,k}$ cannot be connected to that node
\item[\emph{v)}] $\mathrm{rank}\big(F + G \mathrm{diag}(\frac{\partial y_{1,k}}{ \partial u_{1,k}},\ldots, 
\frac{\partial y_{M,k}}{ \partial u_{M,k}}) \big) = J$, for all feasible $x_{m,k}$, $u_{m,k}$ and $y_{m,k}$, with $J$ the number of network nodes, which corresponds to the number of rows in $E$, $F$ and $G$, and $\textrm{diag}(\cdot)$ is a block-diagonal matrix with its arguments on the block diagonal. 
\end{itemize}
It should be noted that these requirements are mild requirements. Namely, Requirement \emph{i)} enables a convex relaxation, while Requirements \emph{ii)} and Requirements \emph{v)} are related to monotonicity, where $\frac{\partial y_{m,k}}{ \partial u_{m,k}} = - (\frac{\partial h_m}{ \partial y_{m,k}})^{-1} \frac{\partial h_{m}}{ \partial u_{m,k}}$. Examples of such functions are given in Fig.~\ref{fig:functions}. Finally, self-loops, as excluded by Requirement \emph{iv)} would make a converter redundant, and implies that the entries of $E$ and $G$, and $F$ and $G$, cannot have the same nonzero entries, which could violate Requirement \emph{v)}.

\subsection{Global Solutions using Convex Relaxations}

Since the only non-convex constraint in \eqref{eq:nonconvex-opt} is $h_m(x,u,y)=0$, we relax this constraint into an inequality leading to
\begin{subequations}\label{eq:NLLP_rel}
\begin{align}
\min_{y_{k},u_{k},x_{k},s_k} & \sum_{k\in\mathcal{K}} \!  a^\top u_{k} + b^\top y_{k} \label{eq:NLLP_rel_a}\\
\text{s.t.}\quad & h_m(x_{m,k},u_{h,k},y_{m,k}) \leqslant 0 \label{eq:NLLP_rel_b}\\
                 & E x_{k} +  F u_{k} + G y_k + s_{k}=v_k, \label{eq:NLLP_rel_c} \\
                 & x_{k+1}=A x_{k} + B u_{k} + f_k \label{eq:NLLP_rel_d}\\
                 &\underline{x} \leqslant x_k \leqslant \overline{x}, \ \ \underline{u} \leqslant u_k \leqslant \overline{u}, \ \ s_k\geqslant0 \label{eq:NLLP_rel_e}
\end{align} 
\end{subequations}
with $a=[a_1^\top \ldots a_M^\top]^\top$ and $b=[b_1 \ldots b_M]^\top$, and where some elements of $s_k$ are zero, namely for the energy conserving nodes. It should be noted that because of Requirement \emph{i)}, the above problem is convex. This means that it can be solved efficiently and that all solutions are global solutions. 

The question remains whether solutions to \eqref{eq:NLLP_rel} are also solutions to \eqref{eq:nonconvex-opt}, or at least how to obtain solutions to \eqref{eq:nonconvex-opt} using \eqref{eq:NLLP_rel}. To address this question, let us consider the fact that the solutions to the above optimization problems satisfy
\begin{equation}
d_{\mathrm{CR}} \leqslant p_{\mathrm{CR}} \leqslant p_{\mathrm{NC}},
\end{equation}
where $p_{\mathrm{NC}}$ is the (primal) optimal value of \eqref{eq:nonconvex-opt}, and $p_{\mathrm{CR}}$ and 
$d_{\mathrm{CR}}$ are the primal and dual optimal value of \eqref{eq:NLLP_rel}, respectively, see \cite{boyd2004convex}. The fact that $p_{\mathrm{CR}} \leqslant p_{\mathrm{NC}}$ follows from the fact that the feasible set of the convex relaxation is larger than the original problem. However, we will show below that $p_{\mathrm{CR}} = p_{\mathrm{NC}}$ and we need $d_{\mathrm{CR}} = p_{\mathrm{CR}}$ in its proof, for which we need the constraints to satisfy some form of constraint qualification, see \cite{boyd2004convex}. The following lemma will also generalize Lemma 1 of \cite{padilla2022complete}.

\begin{lem}
Optimization problem \eqref{eq:NLLP_rel} satisfies linear independence constraint qualification (LICQ), leading to $d_{\mathrm{CR}} = p_{\mathrm{CR}}$, if Requirements \emph{ii,iv,v)} are satisfied.
\end{lem}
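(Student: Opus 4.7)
The plan is to verify LICQ directly by showing that the gradients of the active constraints at any feasible point of \eqref{eq:NLLP_rel} are linearly independent. Since \eqref{eq:NLLP_rel} is convex (by Requirement \emph{i}), LICQ at an optimal primal point forces the KKT conditions to hold and therefore yields a dual feasible point matching the primal value, so $d_{\mathrm{CR}}=p_{\mathrm{CR}}$ follows.

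First, I would collect the gradients of all candidate active constraints with respect to the full decision vector $(x_k,u_k,y_k,s_k)_{k\in\mathcal{K}}$: converter rows $\nabla h_m(x_{m,k},u_{m,k},y_{m,k})$, node rows $[E\;F\;G\;I]$, dynamics rows $[-A\;{-B}\;0\;0]$ with a $+I$ in the $x_{k+1}$ block, and the standard basis rows coming from active box/sign constraints on $x_k,u_k,s_k$. Then I would assemble a linear combination of these rows with multipliers $\lambda_{m,k}$, $\mu_{j,k}$, $\nu_k$, and $\xi$ for the active box/sign constraints, set it to zero, and show that all multipliers must vanish.

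The central computation would proceed component by component. Reading off the coefficient of $y_{m,k}$ uses Requirement \emph{iv)}: only the converter row for $m$ and the node rows to which $y_{m,k}$ is actually connected appear, giving $\lambda_{m,k}\,\partial h_m/\partial y_{m,k} + \sum_j \mu_{j,k} G_{j,m} = 0$. By Requirement \emph{ii)}, $\partial h_m/\partial y_{m,k}\neq 0$, so
\[
\lambda_{m,k} \;=\; -\bigl(\partial h_m/\partial y_{m,k}\bigr)^{-1}\sum_j \mu_{j,k} G_{j,m}.
\]
Next, reading off the coefficient of $u_{m,k}$ and again invoking Requirement \emph{iv)} to eliminate cross-terms, substituting the expression for $\lambda_{m,k}$ and using the identity $\partial y_{m,k}/\partial u_{m,k}=-(\partial h_m/\partial y_{m,k})^{-1}\partial h_m/\partial u_{m,k}$ recalled in the paper, yields
\[
\sum_j \mu_{j,k}\!\left(F_{j,m} + G_{j,m}\,\tfrac{\partial y_{m,k}}{\partial u_{m,k}}\right) + (\text{dynamics and box terms in } u_{m,k}) \;=\; 0.
\]
Stacking this over $m$ produces the matrix $F+G\,\mathrm{diag}(\partial y_{m,k}/\partial u_{m,k})$ acting on $\mu_{\cdot,k}$, which has rank $J$ by Requirement \emph{v)}; hence $\mu_{\cdot,k}=0$ once the other terms are shown to contribute nothing, and then $\lambda_{m,k}=0$ as well.

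Finally, I would peel off the remaining multipliers. The $s_{j,k}$-component of the linear combination yields $\mu_{j,k}+\xi^{s}_{j,k}=0$ where $\xi^s$ are the multipliers for active $s_{j,k}\geqslant 0$ constraints; since $\mu=0$, these vanish too. The dynamics multipliers $\nu_k$ are disentangled by a backward induction in time: the coefficient of $x_{K}$ only involves $\nu_{K-1}$ plus any active upper/lower bound multiplier on $x_K$, and since the bound gradients are standard basis vectors in a separate coordinate they decouple, giving $\nu_{K-1}=0$; iterating this argument yields $\nu_k=0$ for all $k$, and the remaining box multipliers $\xi^x,\xi^u$ then vanish because their gradients are standard basis vectors not cancelled by anything else.

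The main obstacle will be the bookkeeping in the second step, where Requirement \emph{iv)} must be used carefully to ensure that the $y_{m,k}$-column of $E,F$ is actually zero on the rows where $G_{j,m}\neq 0$; without this decoupling, the clean formula for $\lambda_{m,k}$ in terms of $\mu_{j,k}$ breaks down, and Requirement \emph{v)} can no longer be applied in one shot. Once that bookkeeping is done, LICQ, and hence $d_{\mathrm{CR}}=p_{\mathrm{CR}}$, follows directly.
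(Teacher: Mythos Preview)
Your approach—setting up a null linear combination of the active-constraint gradients and showing every multiplier must vanish—is the transpose of the paper's row-reduction on the stacked Jacobian, so the overall strategy matches. The execution, however, has an ordering gap. From the $u_{m,k}$-columns you obtain
\[
\bigl(F + G\,\mathrm{diag}(\tfrac{\partial y}{\partial u})\bigr)^{\!\top}\mu_{k} - B^\top \nu_{k} + \xi^{u}_{k} = 0,
\]
so Requirement \emph{v)} alone cannot force $\mu_{k}=0$ until the dynamics contribution $B^\top\nu_{k}$ has been cleared. You then propose to kill $\nu$ by backward induction on the $x_k$-columns, but for $k<K$ those columns carry $E^\top\mu_k+(\partial h/\partial x_k)^\top\lambda_k$, which are not yet zero; the induction therefore does not iterate past $x_K$, and the argument is circular. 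The paper breaks the loop differently: it eliminates $\nu$ \emph{first} by inverting the whole dynamics block $\Gamma_x$ and substituting into the $u$-equations, arriving at $\Psi^{\!\top}\mu=0$ with $\Psi=I\otimes(F+G\tfrac{\partial y}{\partial u})-(I\otimes(E+G\tfrac{\partial y}{\partial x}))\,\Gamma_x^{-1}\Gamma_u$. The structural fact your plan is missing is that $\Gamma_x^{-1}\Gamma_u$ is \emph{strictly} lower block-triangular (causality of the discrete dynamics), so the block diagonal of $\Psi$ is exactly $F+G\,\mathrm{diag}(\partial y/\partial u)$ and Requirement \emph{v)} then yields $\mu=0$ time step by time step. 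In your multiplier language, the fix is to run the induction jointly on $(\nu_{k},\mu_k,\lambda_k)$ backward in $k$, not to settle all of $\mu$ before touching $\nu$.

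A smaller remark: you flag Requirement \emph{iv)} as the main obstacle, but neither your $y$-column computation nor the paper's rank argument actually uses it—the decoupling you invoke there comes simply from each $h_m$ depending only on $(x_{m,k},u_{m,k},y_{m,k})$. The genuine missing ingredient is the strict lower-triangularity of $\Gamma_x^{-1}\Gamma_u$.
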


\begin{proof}
LICQ holds if the gradient of all the active constraints are linearly independent, see \cite{boyd2004convex}. If we define $\omega = [ x^\top \ u^\top \ y^\top ]^\top$, with $x = [x_0^\top \ \ldots \ x_{K-1}^\top]^\top$, and $u$ and $y$ defined similarly,
collect the constraints (\ref{eq:NLLP_rel}b) as $h(x_{k},u_{k}, y_{k})=[
h_1(x_{1,k},u_{1,k}, y_{1,k}) \ldots h_M(x_{M,k},u_{M,k}, y_{M,k})]^\top$ 
and collect the constraints (\ref{eq:NLLP_rel}c) as $\Gamma_x [x^\top x_{K}^\top ]^\top + \Gamma_u u = [ x_{\mathrm{init}}^\top \ f_1^\top \ f_2^\top \ \ldots ]^\top$, with
\begin{equation}
\Gamma_x = {\footnotesize\begin{bmatrix} I &  0 & \ldots & 0  \\ -A & I & \ddots  & \vdots \\ 0  & \ddots & \ddots & 0  \\   & 0  & -A & I \end{bmatrix}}, \quad
\Gamma_u = {\footnotesize\begin{bmatrix} 0 & \dots & 0 \\ -B & 0 & \vdots \\ 0 & \ddots  & 0 \\ 0 & \ldots  & -B\end{bmatrix}}
\end{equation}
the gradient of constraints of (\ref{eq:NLLP_rel}b-d) are given by
\begin{equation}\label{eq:LICQ}
\Xi(x,u,y) =  \begin{bmatrix}
I \otimes \frac{\partial h}{\partial x_k} & I \otimes \frac{\partial h}{\partial u_k} & I \otimes \frac{\partial h}{\partial y_k} \\
\Gamma_x & \Gamma_u & 0 \\
I \otimes E & I \otimes F & I \otimes G
\end{bmatrix}.
\end{equation}
We excluded constraints (\ref{eq:NLLP_rel}e) as they are always linearly independent. It should be noted that $\frac{\partial h}{\partial x_k}$, $\frac{\partial h}{\partial u_k}$, and $\frac{\partial h}{\partial y_k}$ are block-diagonal. Now because $\Gamma_x$ and $\frac{\partial h}{\partial y_k}$ are both invertible (the latter because of Requirement \emph{ii)}), we can premultiply \eqref{eq:LICQ} as follows:
\begin{align}
\begin{bmatrix} I\!\otimes\!(\frac{\partial h}{\partial y_k})^{-1} & 0 & 0 \\ 0 & \Gamma_x^{-1} & 0 \\ -I\!\otimes\!G (\frac{\partial h}{\partial y_k})^{-1} & \!\!(I \!\otimes\! (G \frac{\partial y}{\partial x} - E)) \Gamma_x^{-1} & \!\!I
\end{bmatrix}
\ \Xi(x,u,y) \notag \\ 
=  \begin{bmatrix} - I \!\otimes\! \frac{\partial y}{\partial x} & - I\!\otimes\!\frac{\partial y}{\partial u} & I \\ I & \Gamma_x^{-1} \Gamma_u & 0 \\ 0 & \Psi & 0 
\end{bmatrix}
\end{align}
with implicit derivatives $\frac{\partial y}{\partial x} = - (\frac{\partial h}{\partial y})^{-1} \frac{\partial h}{\partial x}$, $\frac{\partial y}{\partial u} = - (\frac{\partial h}{\partial y})^{-1} \frac{\partial h}{\partial u}$, which are block diagonal, and
\begin{equation}\label{eq:Psi}
\Psi = I \otimes ( F + G \tfrac{\partial y}{\partial u}) - (I \!\otimes\! (E + G \tfrac{\partial y}{\partial x})) \Gamma_x^{-1} \Gamma_u.
\end{equation}
We now conclude that \eqref{eq:LICQ} loses rank, only when \eqref{eq:Psi} looses rank. Because $\Gamma_x^{-1} \Gamma_u$ is strictly lower-triangular (with zeros on the block diagonal), \eqref{eq:Psi} and the other elements are block-diagonal, \eqref{eq:Psi} can only loose rank when the block-diagonal elements lose rank, i.e., when $F + G \tfrac{\partial y}{\partial u}$ is not full row rank. Therefore, LICQ is satisfied when Requirement \emph{v)} is met, which completes the proof.
\end{proof}


The main result of this section shows that $p_{\mathrm{CR}} = p_{\mathrm{NC}}$, in which the dual optimization problem of \eqref{eq:NLLP_rel} is used and generalizes \cite{romijn2018distributed} to beyond quadratic converter models.

\begin{thm}\label{th}
Assume that Requirements \emph{i-v)} are satisfied and, thus, Problem \eqref{eq:nonconvex-opt} satisfies LICQ. Then, the solution to \eqref{eq:NLLP_rel} satisfies (\ref{eq:NLLP_rel}b) with equality, i.e., it solves \eqref{eq:nonconvex-opt}, or a solution to \eqref{eq:NLLP_rel} exists that satisfies \eqref{eq:NLLP_rel_b} with equality. Either way, $p_{\mathrm{CR}} = p_{\mathrm{NC}}$.
\end{thm}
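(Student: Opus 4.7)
The plan is to invoke the preceding lemma to obtain strong duality $d_{\mathrm{CR}}=p_{\mathrm{CR}}$, and then to use the KKT conditions of \eqref{eq:NLLP_rel} to show that at an optimum the relaxed converter inequalities \eqref{eq:NLLP_rel_b} can always be made tight. Once they are tight, the corresponding optimizer is also feasible for the non-convex problem \eqref{eq:nonconvex-opt} with the same cost, which forces $p_{\mathrm{CR}}=p_{\mathrm{NC}}$.

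First I would write the Lagrangian of \eqref{eq:NLLP_rel} with multipliers $\lambda \geqslant 0$ for \eqref{eq:NLLP_rel_b}, $\mu$ (free) for the node equality \eqref{eq:NLLP_rel_c}, $\nu$ (free) for the dynamics \eqref{eq:NLLP_rel_d}, and $\tau \geqslant 0$ for the slack bound $s_k \geqslant 0$. Stationarity with respect to $y_{m,k}$ yields
\begin{equation*}
\lambda_{m,k} \;=\; \frac{\,b_m + \sum_{j} G_{j,m}\,\mu_{j,k}\,}{-\,\partial h_m/\partial y_{m,k}},
\end{equation*}
whose denominator is strictly positive by Requirement \emph{ii)}. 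Stationarity with respect to a dissipative slack $s_{j,k}$ gives $\mu_{j,k}=\tau_{j,k}\geqslant 0$; combined with the non-negativity of $b_m$ and of the entries of $G$ from Requirement \emph{iii)}, this already produces $\lambda_{m,k}\geqslant 0$ whenever the converter's output is attached to a dissipative node.

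For every $(m,k)$ with $\lambda_{m,k}>0$, complementary slackness directly yields $h_m(x^*_{m,k},u^*_{m,k},y^*_{m,k})=0$. For the remaining indices, where $\lambda_{m,k}=0$ forces $b_m+\sum_{j}G_{j,m}\mu_{j,k}=0$, the variable $y_{m,k}$ does not influence the Lagrangian to first order, and I would construct an alternative optimum by decreasing $y_{m,k}$ along the curve $h_m=0$ (possible by the strict monotonicity of Requirement \emph{ii)}) and absorbing the induced mismatch in \eqref{eq:NLLP_rel_c} into the dissipative slacks. Because $G_{j,m}\geqslant 0$ the mismatch adds to (rather than subtracts from) each $s_{j,k}$, so non-negativity of the slack is preserved; for converters whose output is attached only to conservative nodes, Requirement \emph{iv)} rules out self-loops, and Requirement \emph{v)} supplies, through full row rank of $F+G\,\partial y/\partial u$, a local perturbation of the neighbouring inputs that rebalances the node while leaving the cost unchanged.

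The main obstacle I anticipate is this last step: turning the infinitesimal feasibility direction supplied by Requirement \emph{v)} into a finite, cost-preserving adjustment that actually drives every $h_m$ to zero when the converter only touches conservative nodes. Requirement \emph{i)} (convexity of the relaxed feasible set) together with Requirement \emph{ii)} (strict monotonicity in $y$) should let this continuation argument be carried out globally along the tight manifold, but one must check that the box constraints \eqref{eq:NLLP_rel_e} are not violated during the continuation and that the cost does not drift upward, since $\lambda_{m,k}=0$ only bounds the first-order sensitivity. Once this step is in place, the resulting point is feasible for \eqref{eq:nonconvex-opt} at the same cost as the optimum of \eqref{eq:NLLP_rel}, and $p_{\mathrm{CR}}=p_{\mathrm{NC}}$ follows.
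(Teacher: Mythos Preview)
Your approach is essentially the paper's: use LICQ for strong duality, take the stationarity condition in $y$, and split on whether the resulting multiplier for \eqref{eq:NLLP_rel_b} is strictly positive (complementary slackness gives tightness) or zero (construct an alternative optimum with $h_m=0$).

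Two points of difference worth noting. First, your detour through the slack stationarity $\mu_{j,k}=\tau_{j,k}\geqslant 0$ is not needed, and the paper does not take it: the non-negativity $\lambda_{m,k}\geqslant 0$ is a KKT requirement for the inequality \eqref{eq:NLLP_rel_b}, not something to be proven, and together with $-\partial h_m/\partial y>0$ it already forces the numerator $b_m+(G^\top\mu_k)_m\geqslant 0$ for \emph{every} converter, regardless of whether its output feeds a dissipative or a conservative node. So you can drop the restriction ``attached to a dissipative node.''

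Second, for the degenerate case $\lambda_{m,k}=0$, the paper does not run your primal continuation argument. It simply observes that when both $\mu_m=0$ (their notation for your $\lambda_{m,k}$) and $b_m+(G^\top\lambda_k)_m=0$, the variable $y_{m,k}$ drops out of the Lagrangian entirely, so at the saddle point one is free to pick $y_{m,k}$ on the surface $h_m=0$ without changing the dual (hence primal) value. Your concern about turning the rank condition of Requirement~\emph{v)} into a finite, box-respecting, cost-preserving adjustment is legitimate, but the paper's own treatment of this case is no more detailed than the one-line dual observation; it does not carry out the primal repair you outline.
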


\begin{proof}
Because of LICQ, we have that $d_{\mathrm{CR}} = p_{\mathrm{CR}}$ and we can consider the dual problem
\begin{align}\label{eq:dual}
\!\!\max_{\mu_k\geqslant0,\lambda_k}\!\min_{x_k,u_k,y_k} \sum_{k\in\mathcal{K}}&a^{\!\top} u\!+\!b^{\!\top} y 
+\mu_k^\top h(x_{k},u_{k}, y_{k}) \notag \\[-5pt]
&+\!\lambda_k^{\!\top} (Ex_k\!+\!Fu_k\!+\!Gy_k\!+\!s_k\!-v_k ) \notag \\
&\qquad\qquad+ i_\Omega(x_k,u_k,s_k)
\end{align}
where $i_\Omega(x_k,u_k,s_k)$ denotes the indicator function for constraints (\ref{eq:NLLP_rel}d,e) and $h(x_{k},u_{k}, y_{k})=[
h_1(x_{1,k},u_{1,k}, y_{1,k}) \ldots h_M(x_{M,k},u_{M,k}, y_{M,k})]^\top$. The stationary condition with respect to $y_k$ (one of the conditions for optimality) is given by
\begin{equation}
b^\top + G^\top \lambda_k + \tfrac{\partial h(x_k,u_k,y_k)}{\partial y_k} \mu_k = 0
\end{equation}
Now because $b$ and $G$ only have positive elements and $\tfrac{\partial h(x,u,y)}{\partial y} <0$, we cannot have, $\lambda<0$ as it would violate the necessary condition that $\mu\geqslant0$. Therefore $b^\top + G^\top \lambda\geqslant0$, either means that rows of $b^\top + G^\top \lambda$ are positive, which means that the corresponding elements of $\mu$ are positive, thus that the constraints in (\ref{eq:NLLP_rel}b) are satisfied with equality, or rows in $b^\top + G^\top \lambda$ and the corresponding elements of $\mu$ are equal to zero. In the latter case, those terms will disappear from the dual optimization problem \eqref{eq:dual}, meaning that a $y$ can be selected so that it satisfies \eqref{eq:NLLP_rel_b} with equality. This means that $p_{\mathrm{CR}} = p_{\mathrm{NC}}$, which completes the proof.
\end{proof}

\subsection{Enforcing Exact Relaxations}

As can be concluded from Theorem \ref{th}, it is not guaranteed that \eqref{eq:NLLP_rel} provides a solution where \eqref{eq:NLLP_rel_b} is satisfied with equality, though a solution exists that does. Namely, \eqref{eq:NLLP_rel} can have multiple global minima, i.e., different solutions that all achieve the same minimal cost, of which at least one satisfies (\ref{eq:NLLP_rel}b) with equality. This can happen, for instance, when two converter models are connected to a dissipative node at the output. One of the branches will be minimized, whereas the other branch is left with too much energy. 

Since \eqref{eq:NLLP_rel} has multiple global minima, we can add regularization to ensure the global minimum is found of \eqref{eq:NLLP_rel} that is also a global minimum of \eqref{eq:nonconvex-opt}. An approach that results in the relaxed constraints being satisfied with equality is to first solve \eqref{eq:NLLP_rel}, check whether the constraints of \eqref{eq:nonconvex-opt} are satisfied. If this is not the case, add the outputs, $y_k$, that are not directly satisfied with equality to the cost function and multiply them with a small weight, and solve again using \eqref{eq:NLLP_rel}. Since the original optimization problem has only one global minimum, adding the output of a carefully chosen converter to the cost function will steer the solution towards a solution that has the smallest possible feasible $y$, i.e., having (\ref{eq:nonconvex-opt}) with equality. For Example~\ref{exmp:ED}, we can also add the output of the lethargy to the cost function and in the case of Example~\ref{exmp:EM} the output power of the EM could be added. 

\subsection{Second-order Cone Formulation}\label{sec:socp}

Even though the relaxed problem \eqref{eq:NLLP_rel} is a convex optimization problem, it still may be difficult to solve as we did not further specify the form of the function $h_m(x,u,y)$. If Requirements \emph{i-v)} are satisfied, a general nonlinear programming solver can find a global optimal solution, though the computational complexity might be large, and global optimization methods, such as multi-shooting or genetic algorithms are no longer needed. More interestingly, for some very common functions $h_m(x,u,y)$, the convex relaxations actually lead to second-order cone problems, where the feasible set is a Lorenz cone, given by
\begin{equation}
	\mathcal{Q}_N = \{\mathbf{x} = [x_0\quad \bar{x}]^\top \in \Real^N: x_0\geqslant \norm{\bar{x}}_2\},
\end{equation}
or equally $\mathbf{x} \geq_{\mathcal{Q}_N}0$, depending on the form of $h(\cdot)\in\mathcal{Q}_N$, see \cite{Alizadeh2003SOCP}. For Examples \ref{exmp:ED}, \ref{exmp:EM} and \ref{exmp:pH}, it can be seen that the modelling framework is general enough to fit these problems, as they all have linear dynamics, linear network constraints and nonlinear converters that can be written as second-order conic constraints by relaxing the equality into an inequality.

The non-linearities found in Examples \ref{exmp:ED}-\ref{exmp:pH} are very common, and can all expressed as quadratic functions of the form
\begin{equation}
\tfrac{1}{2} \xi^\top Q \xi + a^\top \xi + \beta = 0, \label{eq:quadraticconstraint}
\end{equation}
with $\xi = [x^\top \ u^\top \ y ]^\top$. For a linear function, we have that $Q = 0$ and this leads to a polyhedral constraint. This enables the use of variations of the following converter models
\begin{align}
\tfrac{1}{2\varepsilon} u_{m,k}^2 + u_{m,k} - y_{m,k} &= 0 \\ 
y_{m,k}(u_{m,k}+\varepsilon) &= 1 
\end{align}
with either $\underline{u}_m > -\varepsilon$ or $\overline{u}_m < -\varepsilon$, which ensure monotonicity, as needed for Requirement \emph{v)}. All these conditions can be rewritten as a second-order conic constraint ~\cite{lobo1998applications}, leading to a SOCP for which efficient solvers exist.

\section{Simulation study showing the exactness of relaxations}\label{sec:numex}

This section contains a simulation study of Example \ref{exmp:ED} with values from Table~\ref{tab:my_label}. The nonlinear constraints are relaxed into second-order conic constraints as proposed in \eqref{eq:NLLP_rel_e}. For the simulation, we consider three cases, namely:
\begin{itemize}
    \item Case 1, where an upper bound of $T_{\max}= 700$ seconds is given to complete the route.
    \item Case 2, in which the upper time limit is set to a large number ($T_{\max} = 10^5$ seconds in this case)
    \item Case 3, where in addition to case 2 the lethargy is added to the cost function as regularization, resulting in the cost function $\min \sum_{k\in \mathcal{K}} y_{em,k} + \sigma y_{\ell,k}$, where $\sigma$ is chosen to be $0.01$.
\end{itemize}

First, we show that the example satisfies the requirements given in Section~\ref{sec:solution-framework}, since, the converter models can be relaxed into convex functions (i), both \eqref{eq:mv2} and \eqref{eq:lethv} are strictly decreasing in the output (ii) and the network requirements \emph{iii)-iv)} are satisfied. Requirement \emph{v)} is also satisfied as can be shown by considering the network matrices,
$$F = \begin{bmatrix}
    0 & 0 & 0 \\ 0 & 1 & 0
\end{bmatrix} \text{ and } G = \begin{bmatrix}
    0 & 1 & 0 \\ 0 & 0 & 1
\end{bmatrix},$$
with
$$u_k \!=\!\begin{bmatrix}
    u_{\textrm{kin},k} & u_{\textrm{v},k} & u_{\ell,k}
\end{bmatrix}^\top\!\! \text{ and } \ y_k \!=\! \begin{bmatrix}
    y_{\textrm{kin},k} & y_{\textrm{v},k} & y_{\ell,k}
\end{bmatrix}^\top\!\!,$$
the partial derivatives of the converter models,
\begin{equation*}
    \mathrm{diag}(\frac{y_{1,k}}{u_{1,k}}, \dots, \frac{y_{M,k}}{u_{M,k}}) = \begin{bmatrix}
        0 & 0 & 0 \\
        0 & u_{\mathrm{v},k} & 0 \\
        0 & 0 & -\frac{1}{u_{\ell,k}}
    \end{bmatrix}
\end{equation*}
and substituting these into Requirement \emph{v)}, which results into the rank condition being fulfilled for $\underline{u}_{\mathrm{v},k}>0$ and $\underline{u}_{\ell,k} > 0$.

In every case, the optimal control problem can be solved by MOSEK with an average computation time of $0.55$, $0.49$ and  $0.51$ seconds for Case 1 to 3, respectively. These computations times are achieved using a horizon length of 2500 on a pc with an Intel Core i7-9750H at 2.6Ghz and 8Gb of RAM.
\begin{table}[b]
	\centering
	\caption{Parameters and values for numerical example.}
	\begin{tabular}{|l|c|c|}
		\hline
		symbol & Parameter & Value \\ \hline
		$m$ & mass & 13,400 kg \\
		$\delta_s$ & Sampling distance & 5 m \\
		$A$ & & 0.9981 \\
		$B$ & & 0.005 \\
		\hline        
	\end{tabular}
	
	\label{tab:my_label}
\end{table}
The corresponding solution to the three cases can be seen in Fig.~\ref{fig:highvel}, which shows the produced velocity profile, and Fig.~\ref{fig:vehpropulsion}, which shows the required propulsion force to reach each of the velocity profiles. The residual for the relaxed converter models are plotted in Fig.~\ref{fig:errors}. By examining these residuals, it can be seen that Case 1 has residuals in the order of $10^{-7}$ to $10^{-4}$, which is in most cases enough for the solution to be exact with respect to the non-convex formulation. However, in Case 2 it is clear that the magnitude of the residual is much higher than in Case 1. This can be attributed to the fact that after 3000 meters, the vehicle starts rolling down the hill, which increases the kinetic energy, but is not properly propagated through the network. In Case 3, the same simulation is performed, but now the lethargy is added to the cost function. Looking at the residuals in Fig.~\ref{fig:errors}, shows that the magnitude of the error has decreased 250 times. This change is also reflected in the velocity profile in Fig.~\ref{fig:highvel}, where a slightly higher velocity can be observed. From this, it can be concluded that adding a regularization term to the cost function, leads to \eqref{eq:NLLP_rel_b} in our solution framework satisfying the equality in \eqref{eq:nonconvex-opt-h}, which shows that the problem is solved using an exact relaxation. Moreover, it can be seen that the input forces in Fig.~\ref{fig:vehpropulsion} are indistinguishable between Case 2 and 3, which means that although the cost function was changed, the optimal cost has remained the same. 

\begin{figure}
    \vspace{2mm}
	\centering
	\includegraphics{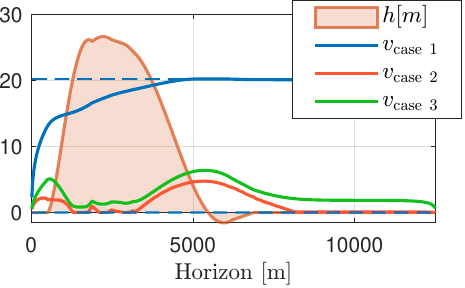}
	\caption{Velocity profile resulting from solving the eco-driving problem subject to a given height map. The velocity is given in $m/s$.}
	\label{fig:highvel}
\end{figure}

\begin{figure}
	\centering
	\includegraphics{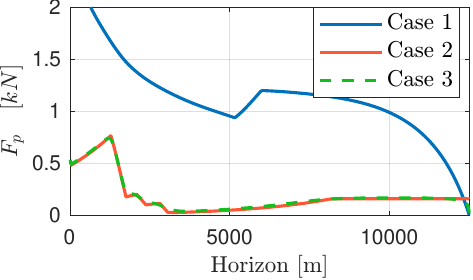}
	\caption{Applied propulsion force corresponding to the velocity profiles in Fig.~\ref{fig:highvel}.}
	\label{fig:vehpropulsion}
\end{figure}

\begin{figure}
    \centering
    \begin{subfigure}{0.5\linewidth}
    \centering
        \includegraphics{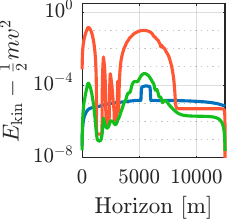}
    \end{subfigure}%
    \begin{subfigure}{0.5\linewidth}
        \centering
        \includegraphics{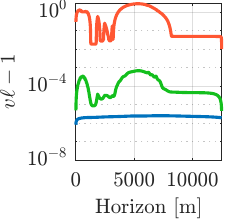}
    \end{subfigure}
    \caption{Relative error of the two non-linear functions given for the cases in Fig.~\ref{fig:highvel}}\label{fig:errors}
\end{figure}

\section{Conclusion}\label{sec:conclusions}
In this paper, we have generalized multiple methods for solving energy management problems into a general framework. We have shown that a large variety of problems and models can be used within the framework, as illustrated by the examples in this paper. Moreover, the resulting non-convex optimization problem can be relaxed into a convex optimization problem. Conditions are given such that the resulting solution is also the solution of the original non-convex problem. A mathematical proof of this property of the framework was given by using linear independence constraint qualification and optimality conditions of the relaxed problem. Moreover, we show that certain converter models can be solved using second-order cone programs, which means that off the shelf solvers can be used to solve the problem.

\balance
\bibliographystyle{IEEEtran}
\bibliography{references.bib}

\begin{thebibliography}{10}
\providecommand{\url}[1]{#1}
\csname url@rmstyle\endcsname
\providecommand{\newblock}{\relax}
\providecommand{\bibinfo}[2]{#2}
\providecommand\BIBentrySTDinterwordspacing{\spaceskip=0pt\relax}
\providecommand\BIBentryALTinterwordstretchfactor{4}
\providecommand\BIBentryALTinterwordspacing{\spaceskip=\fontdimen2\font plus
\BIBentryALTinterwordstretchfactor\fontdimen3\font minus
  \fontdimen4\font\relax}
\providecommand\BIBforeignlanguage[2]{{%
\expandafter\ifx\csname l@#1\endcsname\relax
\typeout{** WARNING: IEEEtran.bst: No hyphenation pattern has been}%
\typeout{** loaded for the language `#1'. Using the pattern for}%
\typeout{** the default language instead.}%
\else
\language=\csname l@#1\endcsname
\fi
#2}}

\bibitem{padilla2018global}
G.~Padilla, S.~Weiland, and M.~Donkers, ``A global optimal solution to the
  eco-driving problem,'' \emph{IEEE Control Systems Letters}, 2018.

\bibitem{borsboom2021convex}
O.~Borsboom, C.~A. Fahdzyana, T.~Hofman, and M.~Salazar, ``A convex
  optimization framework for minimum lap time design and control of electric
  race cars,'' \emph{IEEE Trans Vehicular Technology}, 2021.

\bibitem{ebbesen2017time}
S.~Ebbesen, M.~Salazar, P.~Elbert, C.~Bussi, and C.~H. Onder, ``Time-optimal
  control strategies for a hybrid electric race car,'' \emph{IEEE Trans Control
  Systems Technology}, 2017.

\bibitem{johannesson2015look}
L.~Johannesson, M.~Nilsson, and N.~Murgovski, ``Look-ahead vehicle energy
  management with traffic predictions,'' \emph{IFAC-PapersOnLine}, 2015.

\bibitem{scordia2005global}
J.~Scordia, M.~Desbois-Renaudin, R.~Trigui, B.~Jeanneret, F.~Badin, and
  C.~Plasse, ``Global optimisation of energy management laws in hybrid vehicles
  using dynamic programming,'' \emph{Int J Vehicle Design}, 2005.

\bibitem{schmid2021energy}
R.~Schmid, J.~Buerger, and N.~Bajcinca, ``Energy management strategy for
  plug-in-hybrid electric vehicles based on predictive pmp,'' \emph{IEEE Trans
  Control Systems Technology}, 2021.

\bibitem{jager2013HEV}
B.~De~Jager, T.~Van~Keulen, and J.~Kessels, \emph{Optimal control of hybrid
  vehicles}.\hskip 1em plus 0.5em minus 0.4em\relax Springer, 2013.

\bibitem{romijn2018distributed}
T.~Romijn, M.~Donkers, J.~T. Kessels, and S.~Weiland, ``A distributed
  optimization approach for complete vehicle energy management,'' \emph{IEEE
  Trans Control Systems Technology}, 2018.

\bibitem{padilla2022complete}
G.~P. Padilla, G.~Belgioioso, and M.~Donkers, ``Complete vehicle energy
  management using adaptive primal--dual operator splitting,''
  \emph{Automatica}, 2022.

\bibitem{egardt2014electromobility}
B.~Egardt, N.~Murgovski, M.~Pourabdollah, and L.~J. Mardh, ``Electromobility
  studies based on convex optimization: Design and control issues regarding
  vehicle electrification,'' \emph{IEEE Control Systems Magazine}, 2014.

\bibitem{padilla2020port}
G.~Padilla, J.~F. Paredes, and M.~Donkers, ``A port-hamiltonian approach to
  complete vehicle energy management: a battery electric vehicle case study,''
  in \emph{Proc American Control Conf}, 2020.

\bibitem{MA2021102746}
F.~Ma, Y.~Yang, J.~Wang, X.~Li, G.~Wu, Y.~Zhao, L.~Wu, B.~Aksun-Guvenc, and
  L.~Guvenc, ``Eco-driving-based cooperative adaptive cruise control of
  connected vehicles platoon at signalized intersections,''
  \emph{Transportation Research Part D: Transport and Environment}, 2021.

\bibitem{hamednia2020time}
A.~Hamednia, N.~Murgovski, and J.~Fredriksson, ``Time optimal and eco-driving
  mission planning under traffic constraints,'' in \emph{Proc Int Conf on
  Intelligent Transportation Systems}, 2020.

\bibitem{HUANG2018132}
Y.~Huang, H.~Wang, A.~Khajepour, B.~Li, J.~Ji, K.~Zhao, and C.~Hu, ``A review
  of power management strategies and component sizing methods for hybrid
  vehicles,'' \emph{Renewable and Sustainable Energy Reviews}, 2018.

\bibitem{mennen2022sequential}
S.~Mennen, F.~Willems, and M.~Donkers, ``A sequential quadratic programming
  approach to combined energy and emission management of a heavy-duty
  parallel-hybrid vehicle,'' \emph{IFAC-PapersOnLine}, 2022.

\bibitem{gao2007hybrid}
W.~Gao and C.~Mi, ``Hybrid vehicle design using global optimisation
  algorithms,'' \emph{Int J Electric and Hybrid Vehicles}, 2007.

\bibitem{boyd2004convex}
S.~P. Boyd and L.~Vandenberghe, \emph{Convex optimization}.\hskip 1em plus
  0.5em minus 0.4em\relax Cambridge university press, 2004.

\bibitem{Alizadeh2003SOCP}
F.~Alizadeh and D.~Goldfarb, ``Second-order cone programming,''
  \emph{Mathematical Programming}, 2003.

\bibitem{lobo1998applications}
M.~S. Lobo, L.~Vandenberghe, S.~Boyd, and H.~Lebret, ``Applications of
  second-order cone programming,'' \emph{Linear algebra and its applications},
  1998.

\end{thebibliography}
\end{document}